\newtheorem*{theorem}{Theorem}
\newtheorem{proposition}{Proposition}
\newtheorem{lemma}{Lemma}
\theoremstyle{definition}
\newtheorem{definition}{Definition}
\newtheorem{remark}{Remark}
\newcommand{\Gal}{\operatorname{Gal}}
\newcommand{\GL}{\operatorname{GL}}
\newcommand{\Ad}{\operatorname{Ad}}
\newcommand{\Hom}{\operatorname{Hom}}
\newcommand{\Dim}{\operatorname{dim}}
\newcommand{\Mod}{\,\mathrm{mod}\,}
\newcommand{\bk}{\mathbf{k}}
\newcommand{\Q}{\mathbb{Q}}
\title{Lifting Galois representations
over arbitrary number fields}
\begin{document}
\author{Yoshiyuki Tomiyama}
\maketitle

\begin{abstract}
It is proved that every two-dimensional residual Galois 
representation of the absolute Galois group of an arbitrary
number field lifts to a characteristic zero
$p$-adic representation, if local
lifting problems at places above $p$ are unobstructed.
\end{abstract}

\section{Introduction}
Let $\bk$ be a finite field of characteristic $p\geq 3$. Let
$K$ be a number field of finite degree over $\mathbb{Q}$
and $G_K$ its absolute Galois group $\Gal(\bar{K}/K)$. 
We consider continuous representations
\[
 \bar{\rho}:G_K\to\GL_2(\bk).
\]
The central question that we study in this paper is the existence of
a lift of $\bar{\rho}$ to $W(\bk)$, the ring of Witt vectors of $\bk$.
This question has been motivated by a conjecture 
of Serre ([S1]), that is, all odd absolutely irreducible
continuous representations $\bar{\rho}:G_{\Q}\to\GL_2(\bk)$ 
are modular of prescribed weight, level and character.
This predicts the existence of a lift to characteristic zero.
This conjecture was proved by Khare and Wintenberger in [KW1,KW2].
In [K], Khare proved the existence of lifts to $W(\bk)$ for any
$\bar{\rho}:G_K\to\GL_2(\bk)$ which are reducible.
Ramakrishna proved under very general conditions on $\bar{\rho}$ that
there exist lifts to $W(\bk)$ for $K=\Q$ in [R1,R2]. 
Gee's results ([G]) imply that there exist lifts to $W(\bk)$ for 
$p\geq 5$ and $K$ satisfying $[K(\mu_p):K]\geq 3$, 
where $\mu_p$ is the group of $p$-th roots of unity.
B\"ockle and Khare have proved the general $n$-dimensional
case for function field in [BK].
In this paper,
we extend Theorem 1 of [R1] to arbitrary number fields.
In particular, we will omit the condition $[K(\mu_p):K]\geq 3$.
Hence we can take the field $K$ to be $\mathbb{Q}(\mu_p)^+$,
the totally real subfield of $\mathbb{Q}(\mu_p)$.

For a place $v$ of $K$, let $K_v$ be
the completion of $K$ at $v$, and let
$G_v$  be its
absolute Galois group $\Gal(\bar{K}_v/K_v)$.
Let $\Ad^0\bar{\rho}$ be the set of all trace zero 
two-by-two matrices over $\bk$ with Galois action through $\bar{\rho}$
by conjugation. Our main result is the following:
\begin{theorem}
Let $K$ be a number field, and
let $\bar{\rho}:G_K\to\GL_2(\bk)$ be a continuous representation
with coefficients in a finite field $\bk$ 
of characteristic $p\geq 7$. Assume that
$H^2(G_v,\Ad^0\bar{\rho})=0$ for each places $v\mid p$. Then $\bar{\rho}$
lifts to a continuous representation $\rho:G_K\to\GL_2(W(\bk))$
which is unramified outside a finite set of places of $K$.
\end{theorem}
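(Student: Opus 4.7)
The plan is to adapt Ramakrishna's deformation-theoretic method to the general number field setting. Let $S$ be a finite set of places of $K$ containing all archimedean places, all places where $\bar{\rho}$ is ramified, and all $v\mid p$, and consider deformations of $\bar{\rho}$ unramified outside $S$. For each $v\in S$ one fixes a local deformation condition $\mathcal{C}_v$: at places $v\mid p$, the unobstructedness hypothesis $H^2(G_v,\Ad^0\bar{\rho})=0$ permits taking $\mathcal{C}_v$ to be the full local lifting ring; at the remaining finite $v\in S$ one imposes the minimally ramified condition in the spirit of Ramakrishna; and at the real archimedean places one takes a suitable tangent-space condition to account for the possible failure of oddness.

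By the standard local-to-global analysis via the Wiles--Greenberg formula and Poitou--Tate duality, the obstructions to producing, step by step, lifts $\rho_n: G_{K,S}\to \GL_2(W_n(\bk))$ compatible with the chosen local lifts are governed by the dual Selmer group $H^1_{\mathcal{L}^\perp}(G_{K,S},\Ad^0\bar{\rho}(1))$. The strategy is therefore to enlarge $S$ by a finite collection of auxiliary ``Ramakrishna primes'' $q$, at each of which we impose a carefully chosen local deformation condition that is smooth of the right dimension and simultaneously annihilates a prescribed nonzero class in the current dual Selmer group. Once the dual Selmer group has been cut down to zero, an inductive lifting argument produces the desired $\rho : G_K\to\GL_2(W(\bk))$, unramified outside the enlarged $S$.

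The main obstacle is to show that enough Ramakrishna primes exist. In Ramakrishna's original work over $\Q$ and Gee's generalization to fields with $[K(\mu_p):K]\geq 3$, a Chebotarev density argument applied to the splitting field of $\Ad^0\bar{\rho}$ and $\mu_p$ produces, for each nontrivial dual Selmer class $\phi$, a prime $q$ whose Frobenius has the correct eigenvalue structure and simultaneously pairs nontrivially with $\phi$. When $[K(\mu_p):K]$ is $1$ or $2$, however, the relevant Galois orbits degenerate and the naive Chebotarev argument breaks down: the classes one wishes to separate may lie in a subspace on which the available Frobenius elements act too uniformly. The crux of the argument will be a more careful cohomological analysis, using that $p\geq 7$ guarantees that the image of $\bar{\rho}|_{G_{K(\mu_p)}}$ contains enough elements of order $p$ with nontrivial unipotent action on $\Ad^0\bar{\rho}$, to produce, for each nontrivial class $\phi$ in the dual Selmer group, an auxiliary prime where the Ramakrishna-type local condition can be imposed and where the associated restriction map annihilates $\phi$. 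Feeding these primes into the general lifting machinery then completes the argument.
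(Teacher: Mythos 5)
Your overall strategy is indeed the one the paper follows (Ramakrishna's method, axiomatized as in Taylor: local conditions at $v\in S$, dual Selmer group as the obstruction, auxiliary primes chosen by \v{C}ebotarev to kill it, then an inductive lifting argument). But the proposal has a genuine gap exactly at the point where the theorem goes beyond Ramakrishna and Gee: the case $[K(\mu_p):K]\le 2$. You correctly identify this as the crux, but then only assert that ``a more careful cohomological analysis'' will produce the auxiliary primes; no argument is given. Two concrete things are missing. First, one needs the vanishing $H^1(\Gal(E/K),\Ad^0\bar{\rho})=H^1(\Gal(E/K),\Ad^0\bar{\rho}(1))=0$ with $E=K(\Ad^0\bar{\rho})K(\mu_p)$, so that a nonzero (dual) Selmer class restricts nontrivially to $\Gal(K_S/E)$ and \v{C}ebotarev can see it; when $[K(\mu_p):K]=2$ and $K(\mu_p)\subset K(\Ad^0\bar{\rho})$ this is not formal and the paper's Lemma 4 handles it by showing the projective image must then be $\mathrm{PGL}_2(\mathbb{F}_{p^r})$, restricting to $\Gal(E/K(\mu_p))\cong\mathrm{PSL}_2(\mathbb{F}_{p^r})$ and using $H^1(\mathrm{PSL}_2(\mathbb{F}_{p^r}),\Ad^0\bar{\rho})=0$. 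Second, at the auxiliary primes themselves you cannot use the usual Ramakrishna-type local condition: since one can only guarantee a Frobenius element $\tau\widetilde{\sigma}$ with $\chi_p=1$ and $\bar{\rho}$-image a scalar times a nontrivial unipotent (and $q_w\equiv 1\bmod p$), one needs a new smooth local deformation condition at such primes. The paper constructs it explicitly (Case I: the family over $W(\bk)[[X]]$ with $s\mapsto\bigl(\begin{smallmatrix}\widehat{\lambda}q_v&\widehat{\lambda}\\0&\widehat{\lambda}\end{smallmatrix}\bigr)$, $t\mapsto\bigl(\begin{smallmatrix}1&X\\0&1\end{smallmatrix}\bigr)$), proves $\Dim_{\bk}L_w=\Dim_{\bk}H^1(G_w/I_w,\Ad^0\bar{\rho})=1$ and the axioms (P1)--(P7), and then in Lemma 5 chooses $\tau$ avoiding translates of the Borel subspaces so that the prime simultaneously kills the dual Selmer class $\phi$ and keeps the Selmer-side surjectivity needed for the counting argument. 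Without specifying this local condition and computing $L_w$ and $L_w^{\perp}$, the claim that the prime ``annihilates $\phi$'' while preserving the numerical balance is unsupported.

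A second, smaller omission: you run the deformation argument for arbitrary $\bar{\rho}$, but the \v{C}ebotarev/auxiliary-prime machinery needs $\Ad^0\bar{\rho}$ (hence $\Ad^0\bar{\rho}(1)$) irreducible. The paper first reduces to projective image $\mathrm{PSL}_2(\mathbb{F}_{p^r})$ or $\mathrm{PGL}_2(\mathbb{F}_{p^r})$ via Dickson's classification, disposing of the prime-to-$p$ image case by Schur--Zassenhaus and the Borel (reducible) case by Khare's lifting theorem; your proposal never addresses these degenerate cases, for which the Selmer-group strategy as described does not apply. (Minor points: no local condition is needed at archimedean places since $p\ge 7$, they only enter through $H^0$ terms in the Wiles formula; and the dimension inequality at $v\mid p$ coming from the local Euler characteristic and $H^2(G_v,\Ad^0\bar{\rho})=0$ should be checked, though this part is routine.)
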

Our method used in the proof is essentially that of
Ramakrishna [R1,R2]. In this paper, we follow the more axiomatic
treatment presented in [T].
In Section 2, we recall a criterion of 
Ramakrishna [R2] and Taylor [T] for lifting problems.
In Section 3, we define good local lifting problems at certain
unramified places and ramified places not dividing $p$, which
will be used in Section 4. In Section 4, we prove Theorem
by using the criterion in Section 2 and local lifting problems in Section 3.
 
Throughout this paper, we assume that $p$ is a prime $\geq 7$.

\section{A criterion for lifting problems}
In this section we recall a criterion of Ramakrishna [R2] and Taylor [T] 
for a lifting from a fixed residual Galois representation
to a $p$-adic Galois representation.

Let $\bk$ be a finite field of characteristic $p$.
Throughout this paper, we consider a continuous representation
\[
 \bar{\rho}:G_K\to\GL_2(\bk).
\]
Let $S$ denote a finite set of places of $K$
containing the places above $p$, the infinite places and
the places at which $\bar{\rho}$ is ramified,
and let $K_S$ denote the maximal algebraic extension of $K$
unramified outside $S$.
Thus $\bar{\rho}$ factors through $\Gal(K_S/K)$.
Put $G_{K,S}=\Gal(K_S/K)$.
For each place $v$ of $K$, we fix an embedding $\bar{K}\subset\bar{K}_v$.
This gives a corresponding continuous homomorphism $G_v\to G_{K,S}$.

Let  $\mathscr{A}$  be the category
of complete noetherian local rings $(R,\mathfrak{m}_{R})$ 
with residue field  $\bk$
where the morphisms are homomorphisms that induce the identity map
on the residue field.

Fix a continuous homomorphism
$\delta:G_{K,S}\to W(\bk)^{\times}$, and for every 
$(R,\mathfrak{m}_{R})\in\mathscr{A}$ let $\delta_{R}$ be
the composition $\delta_{R}:G_{K,S}\to W(\bk)^{\times}\to R^{\times}$.
Suppose $\bar{\rho}:G_{K,S}\to\GL_2(\bk)$
has $\det\bar{\rho}=\delta_{\bk}$.

By a $\delta$-{\it lift} (resp.\ $\delta|_{G_v}$-{\it lift})
of $\bar{\rho}$ (resp.\ $\bar{\rho}|_{G_{v}}$) we mean
a continuous representation $\rho:G_{K,S}\to\GL_2(R)$ 
(resp.\ $\rho_v:G_v\to\GL_2(R)$) for some
$(R,\mathfrak{m}_R)\in\mathscr{A}$ such that $\rho\ (\mathrm{mod}\
\mathfrak{m}_R)=\bar{\rho}$ 
(resp.\ $\rho_v\ (\mathrm{mod}\ \mathfrak{m}_R)=\bar{\rho}|_{G_v}$) 
and det$\rho=\delta_R$
(resp.\ det$\rho_v=\delta_R|_{G_v}$).
Let $\Ad^0\bar{\rho}$ be the set of all trace zero 
two-by-two matrices over $\bk$ with Galois action through $\bar{\rho}$
by conjugation.

\begin{definition}
For a place $v$ of $K$,
we say that a pair $(\mathscr{C}_v,L_v)$, where $\mathscr{C}_v$
is a collection of $\delta|_{G_v}$-lifts of $\bar{\rho}|_{G_v}$ 
and $L_v$ is a subspace of 
$H^1(G_v,\Ad^0\bar{\rho})$, is {\it locally admissible} 
if it satisfies the following conditions:

\begin{itemize}

\item [(P1)] $(\bk,\bar{\rho}|_{G_v})\in\mathscr{C}_v$.

\item [(P2)] The set of $\delta|_{G_v}$-lifts 
in $\mathscr{C}_v$ to a fixed ring
$(R,\mathfrak{m}_R)\in\mathscr{A}$ is closed under conjugation by
elements of $1+\mathrm{M}_2(\mathfrak{m}_R)$. 

\item [(P3)] If $(R,\rho)\in\mathscr{C}_v$ and $f:R\to S$
is a morphism in $\mathscr{A}$ then $(S,f\circ\rho)\in\mathscr{C}_v$.

\item [(P4)] Suppose that $(R_1,\rho_1)$ and 
$(R_2,\rho_2)\in\mathscr{C}_v$, and $I_1$ (resp.\ $I_2$) is an
ideal of $R_1$ (resp.\ $R_2$) and that $\phi:R_1/I_1
\stackrel{\sim}{\to} R_2/I_2$
is an isomorphism such that $\phi\ (\rho_1\ (\mathrm{mod}\ I_1))
=\rho_2\ (\mathrm{mod}\ I_2)$.
Let $R_3$ be the fiber product of $R_1$ and $R_2$
over $R_1/I_1\stackrel{\sim}{\to} R_2/I_2$.
Then $(R_3,\rho_1\oplus\rho_2)\in\mathscr{C}_v$.

\item [(P5)] If $((R,\mathfrak{m}_R),\rho)$ is a $\delta|_{G_v}$-lift 
of $\bar{\rho}|_{G_v}$
such that each $(R/\mathfrak{m}_R^n,\rho\ (\mathrm{mod}\ \mathfrak{m}_R^n))
\in\mathscr{C}_v$ then 
$(R,\rho)\in\mathscr{C}_v$.

\item [(P6)] For $(R,\mathfrak{m}_R)\in\mathscr{A}$, suppose
that $I$ is an ideal of $R$ with $\mathfrak{m}_{R}I=(0)$. If
$(R/I,\rho)\in\mathscr{C}_v$ then there is a $\delta|_{G_v}$-lift 
$\tilde{\rho}$
of $\bar{\rho}|_{G_v}$ to $R$ such that
$(R,\tilde{\rho})\in\mathscr{C}_v$ and 
$\tilde{\rho}\ (\mathrm{mod}\ I)=\rho$.

\item [(P7)] Suppose that $((R,\mathfrak{m}_R),\rho_1)$ and
$(R,\rho_2)$ are $\delta|_{G_v}$-lifts of $\bar{\rho}$ with
$(R,\rho_1)\in\mathscr{C}_v$, and that $I$ is
an ideal of $R$ with $\mathfrak{m}_{R}I=(0)$ and
$\rho_1\ (\mathrm{mod}\ I)=\rho_2\ (\mathrm{mod}\ I)$. 
We shall denote by $[\rho_2-\rho_1]$
an element of $H^1(G_v,\Ad^0\bar{\rho})\otimes_{\bk}I$
defined by $\sigma\mapsto\rho_2(\sigma)\rho_1(\sigma)^{-1}-1$.
Then $[\rho_2-\rho_1]\in L_v\otimes_{\bk}I$
if and only if $(R,\rho_2)\in\mathscr{C}_v$.
\end{itemize}
\end{definition}

\begin{remark}
Note that we do regard $\mathscr{C}_v$ as a functor from
$\mathscr{A}$ to the category of sets.
\end{remark}

Let $S_{\mathrm{f}}$ be the subset of $S$ consisting of finite places.
Throughout this section, suppose that for each $v\in S_{\mathrm{f}}$
a locally admissible pair $(\mathscr{C}_v, L_v)$ is given.

Let $\bar{\chi}_p:G_K\to\bk^{\times}$ be the mod $p$ cyclotomic character.
For the $\bk[G_K]$-module $\Ad^0\bar{\rho}$, by 
$\Ad^0\bar{\rho}(i)$ for $i\in\mathbb{Z}$
we denote the twist of $\Ad^0\bar{\rho}$ 
by the $i$th tensor power of $\bar{\chi}_p$,
and by $\Ad^0\bar{\rho}^{\ast}:=\Hom(\Ad^0\bar{\rho},\bk)$
we denote its dual representation.
The $G_K$-equivariant trace pairing
$\Ad^0\bar{\rho}\times\Ad^0\bar{\rho}\to
\bk:(A,B)\mapsto\mathrm{Trace}(AB)$
is perfect. In particular,
$\Ad^0\bar{\rho}\cong\Ad^0\bar{\rho}^{\ast}$
as representations. Thus 
$\Ad^0\bar{\rho}(1)\cong\Ad^0\bar{\rho}^{\ast}(1)$
as representations. By the Tate local duality 
this induces a perfect pairing
\[
 H^1(G_v,\Ad^0\bar{\rho})\times H^1(G_v,\Ad^0\bar{\rho}(1))
\to H^2(G_v,\bk(1))\cong\bk.
\]

\begin{definition}
A $\delta$-{\it lift of type} $(\mathscr{C}_v)_{v\in S_{\mathrm{f}}}$ is a
$\delta$-lift such that $\rho|_{G_v}\in\mathscr{C}_v$
for all $v\in S_{\mathrm{f}}$.
\end{definition}

\begin{definition}
We define the {\it Selmer group} 
$H^1_{\{L_v\}}(G_{K,S},\Ad^0\bar{\rho})$ to be the kernel of the map
\[
 H^1(G_{K,S},\Ad^0\bar{\rho})\to\bigoplus_{v\in S_{\mathrm{f}}}
H^1(G_v,\Ad^0\bar{\rho})/L_v
\]
and the {\it dual Selmer group}
$H^1_{\{L_v^{\bot}\}}(G_{K,S},\Ad^0\bar{\rho}(1))$
to be the kernel of the map
\[
 H^1(G_{K,S},\Ad^0\bar{\rho}(1))\to\bigoplus_{v\in S_{\mathrm{f}}}
H^1(G_v,\Ad^0\bar{\rho}(1))/L_v^{\bot}
\]
where $L_v^{\bot}\subset H^1(G_v,\Ad^0\bar{\rho}(1))$
is the annihilator of $L_v\subset H^1(G_v,\Ad^0\bar{\rho})$ 
under the above pairing.
\end{definition}

\begin{proposition}
Keep the above notation and assumptions.
If 
\[
 H^1_{\{L_v^{\bot}\}}(G_{K,S},\Ad^0\bar{\rho}(1))=0,
\]
then there exists a $\delta$-lift of $\bar{\rho}$ to $W(\bk)$ 
of type $(\mathscr{C}_v)_{v\in S_{\mathrm{f}}}$.
\end{proposition}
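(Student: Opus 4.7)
I would prove the proposition by constructing, by induction on $n \geq 1$, a compatible sequence of $\delta$-lifts $\rho_n : G_{K,S} \to \GL_2(W(\bk)/p^n)$ of type $(\mathscr{C}_v)_{v \in S_{\mathrm{f}}}$, and then passing to the inverse limit. Continuity is inherited and property (P5) guarantees that the resulting $\rho : G_{K,S} \to \GL_2(W(\bk))$ is itself of type $(\mathscr{C}_v)_{v \in S_{\mathrm{f}}}$. The base case $\rho_1 = \bar{\rho}$ is covered by (P1). For the inductive step, set $A_{n+1} := W(\bk)/p^{n+1}$ and $I := p^n A_{n+1}$, a one-dimensional $\bk$-vector space with $\mathfrak{m}_{A_{n+1}} I = 0$; first I invoke (P6) to produce, for each $v \in S_{\mathrm{f}}$, a local lift $\tilde{\rho}_v \in \mathscr{C}_v$ of $\rho_n|_{G_v}$ to $A_{n+1}$, and then pick any continuous set-theoretic lift $\tilde{\rho} : G_{K,S} \to \GL_2(A_{n+1})$ of $\rho_n$ satisfying $\det \tilde{\rho} = \delta_{A_{n+1}}$.

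The failure of $\tilde{\rho}$ to be a homomorphism is then a $2$-cocycle $c$ valued in $\Ad^0 \bar{\rho} \otimes_{\bk} I$, while comparing $\tilde{\rho}|_{G_v}$ with $\tilde{\rho}_v$ produces $1$-cochains $b_v$ on $G_v$ with $d b_v = c|_{G_v}$. The inductive step reduces to finding a global $1$-cochain $a$ valued in $\Ad^0 \bar{\rho} \otimes_{\bk} I$ such that $d a = c$ and $[a|_{G_v} - b_v] \in L_v \otimes_{\bk} I$ for every $v \in S_{\mathrm{f}}$: granted such $a$, the twist $\rho_{n+1} := (1 + a) \tilde{\rho}$ is a genuine homomorphism lifting $\rho_n$ of the prescribed determinant, and (P7) applied to the pair $(\rho_{n+1}|_{G_v}, \tilde{\rho}_v)$ yields $\rho_{n+1}|_{G_v} \in \mathscr{C}_v$.

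The main obstacle is producing this $a$. The data $(c, (b_v))$ represents an obstruction class in a Selmer-style $H^2$ group fitting into a Poitou--Tate complex for $\Ad^0 \bar{\rho}$ with local conditions $L_v$; its vanishing encodes simultaneously the triviality of the global obstruction $[c] \in H^2(G_{K,S}, \Ad^0 \bar{\rho})$ and the compatibility of the necessary modification with the local subspaces $L_v$. Using the $G_K$-isomorphism $\Ad^0 \bar{\rho} \cong \Ad^0 \bar{\rho}^{\ast}$ coming from the trace pairing together with the nine-term Poitou--Tate exact sequence, this Selmer $H^2$ is Pontryagin dual to $H^1_{\{L_v^{\bot}\}}(G_{K,S}, \Ad^0 \bar{\rho}(1))$. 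The hypothesis that the dual Selmer group vanishes therefore furnishes $a$ and closes the induction.
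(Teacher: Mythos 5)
Your proposal is correct and uses the same scaffolding as the paper's proof: induction over $W(\bk)/p^n$ with (P1) for the base case, (P6) for local lifts, a twist by a global cochain checked against (P7), and (P5) to pass to the inverse limit. The difference is in how the inductive step consumes the hypothesis. The paper quotes the Greenberg--Wiles exact sequence ([H], Theorem 4.50), in which $H^1_{\{L_v^{\bot}\}}(G_{K,S},\Ad^0\bar{\rho}(1))^{\ast}$ is the middle term, deduces that $\alpha\colon H^1(G_{K,S},\Ad^0\bar{\rho})\to\bigoplus_{v}H^1(G_v,\Ad^0\bar{\rho})/L_v$ is surjective and $\beta\colon H^2(G_{K,S},\Ad^0\bar{\rho})\to\bigoplus_{v}H^2(G_v,\Ad^0\bar{\rho})$ is injective, and then argues in two moves: injectivity of $\beta$ together with the local lifts from (P6) kills the global obstruction and produces some homomorphism lifting $\rho_n$, after which surjectivity of $\alpha$ supplies a class $\phi$ so that the twist $(1+\phi)\rho$ satisfies the local conditions via (P7). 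You merge these two moves into a single cochain-level obstruction: the pair $(c,(b_v))$ defines a class in a mapping-cone (``Selmer $H^2$'') group attached to the conditions $L_v$, whose vanishing is exactly the existence of your cochain $a$, and you identify that group with the Pontryagin dual of the dual Selmer group via Poitou--Tate. This is a legitimate and more intrinsic packaging, but be aware that the duality you invoke is precisely equivalent to the paper's quoted sequence: the cone $H^2$ vanishes if and only if $\alpha$ is surjective and $\beta$ is injective, and its identification with $H^1_{\{L_v^{\bot}\}}(G_{K,S},\Ad^0\bar{\rho}(1))^{\ast}$ is the content of the middle term of that sequence (for $p\geq 7$ the archimedean places contribute nothing, so restricting the conditions to $S_{\mathrm{f}}$ is harmless). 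So your argument is complete once you either prove the cone duality or simply extract $\alpha$-surjectivity and $\beta$-injectivity from [H], Theorem 4.50, as the paper does; your remaining cochain manipulations (trace-zero-ness of $c$ and $a$ from the fixed determinant $\delta$, the identity $[\rho_{n+1}|_{G_v}-\tilde{\rho}_v]=[a|_{G_v}-b_v]$, and the application of (P7) with $\tilde{\rho}_v\in\mathscr{C}_v$) are all correct.
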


\begin{proof}
By Theorem 4.50 of [H] we have the exact sequence
\begin{align*}
H^1(G_{K,S},\Ad^0\bar{\rho})&\xrightarrow{\alpha}
\bigoplus_{v\in S_{\mathrm{f}}}
H^1(G_v,\Ad^0\bar{\rho})/L_v\to
H^1_{\{L_v^{\bot}\}}(G_{K,S},\Ad^0\bar{\rho}(1))^{\ast}\\
&\to H^2(G_{K,S},\Ad^0\bar{\rho})
\xrightarrow{\beta}\bigoplus_{v\in S_{\mathrm{f}}}
H^2(G_v,\Ad^0\bar{\rho}).
\end{align*}
Consequently, we see that the map $\alpha$
is surjective and the map $\beta$
is injective.
Now we construct $\delta$-lifts $\rho_n$ of $\bar{\rho}$ to $W(\bk)/p^n$ 
of type $(\mathscr{C}_v)_{v\in S_{\mathrm{f}}}$ inductively.
By the condition (P1), there is nothing to prove for $n=1$.
Assume that there is a $\delta$-lift 
$\rho_{n-1}$ of $\bar{\rho}$ to $W(\bk)/p^{n-1}$ 
of type $(\mathscr{C}_v)_{v\in S_{\mathrm{f}}}$.
By the condition (P6), for each $v\in S_{\mathrm{f}}$ 
we can lift $\rho_{n-1}|G_v$ to a
continuous homomorphism $\rho_v:G_v\to\GL_2(W(\bk)/p^n)$
such that $(W(\bk)/p^n,\rho_v)\in\mathscr{C}_v$.
Thus we can lift $\rho_{n-1}$ to a continuous homomorphism
$\rho:G_{K,S}\to\GL_2(W(\bk)/p^n)$ by injectivity of the map $\beta$.
By surjectivity of the map $\alpha$ we may find a 
class $\phi\in H^1(G_{K,S},\Ad^0\bar{\rho})$
mapping to
\[
 ([\rho_v-\rho|_{G_v}]\Mod L_v)_{v\in S_{\mathrm{f}}}
\in\bigoplus_{v\in S_{\mathrm{f}}}
H^1(G_v,\Ad^0\bar{\rho})/L_v.
\] 
We define $\rho_n:=(1+\phi)\rho$. By the condition (P7) the
representation  $\rho_n$ is a $\delta$-lift of $\bar{\rho}$ to $W(\bk)/p^n$ 
of type $(\mathscr{C}_v)_{v\in S_{\mathrm{f}}}$.
The induction is now complete.
Then we have a $\delta$-lift of $\bar{\rho}$ to $W(\bk)$ 
of type $(\mathscr{C}_v)_{v\in S_{\mathrm{f}}}$ 
by the condition (P5) and the proposition
is proved.
\end{proof}

\section{Local lifting problems}
For a place $v$ of $K$, consider a continuous homomorphism
\[
 \bar{\rho}_v:G_v\to\GL_2(\bk).
\]
We denote by $\widehat{\varepsilon}:G_v\to W(\bk)^{\times}$ 
the Teichm\"uller lift for
any character $\varepsilon:G_v\to\bk^{\times}$
and $\widehat{\mu}\in W(\bk)$ the Teichm\"uller lift for
any element $\mu$ of $\bk$.
Let $\chi_p$ be the $p$-adic cyclotomic character.

In this section, for ramified places not dividing $p$ and
certain unramified places,
we construct a good locally admissible pairs
$(\mathscr{C}_v,L_v)$ with the 
$\delta_v:=\widehat{\det\bar{\rho}_v}\widehat{\bar{\chi}}_p^{-1}\chi_p$,
which will be used in Section 4.
Let $I_v$ be the inertia subgroup of $G_v$.
We distinguish following three cases.

\subsection{Case I}
Suppose $\bar{\rho}_v$ is unramified and $v\nmid p$.
Suppose that
\[
 \bar{\rho}_v(s)=
\left(
\begin{array}{cc}
\lambda & \lambda \\
0 & \lambda 
\end{array}
\right)
\]
and $q_v\equiv 1\mod p$,
where $\lambda$ is an element of $\bk^{\times}$ and 
$s$ is a lift of the Frobenius automorphism in $G_v/I_v$
and $q_v$ is the order of the residue field of $K_v$.
Note that any $\delta_v$-lift of $\bar{\rho}_v$ factors
through the Galois group $\Gal(K_v^{\mathrm{t}}/K_v)$ 
of the maximal tamely ramified extension
$K_v^{\mathrm{t}}$ of $K_v$.
Let $P_v$ be the wild inertia subgroup of $G_v$.
Let $t$ be a topological generator of $I_v/P_v$.
The Galois group $\Gal(K_v^{\mathrm{t}}/K_v)$ 
is generated topologically by $s$ and $t$ 
with the relation $sts^{-1}=t^{q_v}$.
We now define a homomorphism
$\rho_v:G_v\to\Gal(K_v^{\mathrm{t}}/K_v)\to\GL_2(W(\bk)[[X]])$ by
\[
 s\mapsto
\left(
\begin{array}{cc}
\widehat{\lambda}q_v & \widehat{\lambda} \\
0 & \widehat{\lambda} 
\end{array}
\right)
\]
and
\[
 t\mapsto
\left(
\begin{array}{cc}
1 & X \\
0 & 1 
\end{array}
\right).
\]
The images of $s$ and $t$ satisfy the relation $sts^{-1}=t^{q_v}$.
We define a pair $(\mathscr{C}_v,L_v)$.
The functor $\mathscr{C}_v:\mathscr{A}\to\mathbf{Sets}$
is given by 
\begin{align*}
\mathscr{C}_v(R):=\{\rho:G_v&\to\GL_2(R)\mid 
\text{there are } \alpha\in
\Hom_{\mathscr{A}}(W(\bk)[[X]],R) \text{ and } \\
&M\in 1+\mathrm{M}_2(\mathfrak{m}_R) 
\text{ such that } \rho=M(\alpha\circ\rho_v)M^{-1}\}.
\end{align*}
Moreover, if $\rho_0:G_v\to\GL_2(\bk[X]/(X^2))$ denotes the trivial lift
of $\bar{\rho}_v$, we define
a subspace $L_v\subset H^1(G_v,\Ad^0\bar{\rho}_v)$
to be the set
\[
 \{[c]\in H^1(G_v,\Ad^0\bar{\rho}_v)\mid(1+Xc)\rho_0\in
\mathscr{C}_v(\bk[X]/(X^2))\}.
\]
\begin{lemma}
We have \\
$(\mathrm{i})$ $\Dim_{\bk}L_v=\Dim_{\bk}H^1(G_v/I_v,\Ad^0\bar{\rho}_v)
=1$.\\
$(\mathrm{ii})$ The pair $(\mathscr{C}_v,L_v)$ satisfies the conditions
$\mathrm{(P1)}$-$\mathrm{(P7)}$ of Definition 1.
\end{lemma}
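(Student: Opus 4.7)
The plan for (i) is direct computation. Since $G_v/I_v\cong\widehat{\mathbb{Z}}$ is topologically generated by $s$, we have $H^1(G_v/I_v,\Ad^0\bar\rho_v)\cong\Ad^0\bar\rho_v/(s-1)\Ad^0\bar\rho_v$. The action of $s$ on $\Ad^0\bar\rho_v$ is by conjugation by the unipotent part of $\bar\rho_v(s)$; a direct matrix computation in the basis $\{E_{12},E_{11}-E_{22},E_{21}\}$ shows the image of $s-1$ is two-dimensional (using $p\neq 2$), so the cokernel is one-dimensional. For $L_v$, any morphism $\alpha\colon W(\bk)[[X]]\to\bk[X]/(X^2)$ in $\mathscr{A}$ sends $X$ to some $aX$ with $a\in\bk$, and any $M\in 1+\mathrm{M}_2(\mathfrak{m})$ has the form $1+Xm$. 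Using $q_v\equiv 1\pmod p$, we have $\alpha\circ\rho_v(s)=\bar\rho_v(s)$ and $\alpha\circ\rho_v(t)=I+aXE_{12}$; expanding $(1+Xc)\rho_0=(1+Xm)(\alpha\circ\rho_v)(1-Xm)$ modulo $X^2$ yields the equivalent conditions $c(s)=(1-s)m$ and $c(t)=aE_{12}$. Since every cocycle already satisfies $c(t)\in\bk E_{12}$ (from $sts^{-1}=t^{q_v}$ with $q_v\equiv 1\pmod p$), the map $[c]\mapsto([c(s)],c(t))$ identifies $H^1(G_v,\Ad^0\bar\rho_v)\cong(\Ad^0\bar\rho_v/(s-1)\Ad^0\bar\rho_v)\oplus\bk E_{12}$, under which $L_v$ corresponds to $0\oplus\bk E_{12}$, of dimension $1$.

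For (ii), axioms (P1)--(P3) and (P6) are immediate from the definition: (P1) takes $\alpha(X)=0$ and $M=I$; (P2) and (P3) absorb conjugation and pushforward into the choice of $M$ or $\alpha$; (P6) lifts $(\bar M,\bar\alpha)$ to $(M,\alpha)$ using that $1+\mathrm{M}_2(\mathfrak{m}_R)\twoheadrightarrow 1+\mathrm{M}_2(\mathfrak{m}_{R/I})$ and that $\bar\alpha(X)\in\mathfrak{m}_{R/I}$ lifts arbitrarily to $\mathfrak{m}_R$. For (P7), given $\rho_1=M(\alpha\circ\rho_v)M^{-1}$ and a second lift $\rho_2\equiv\rho_1\pmod I$, I would parametrize a candidate presentation as $\rho_2=M(1+n)(\alpha'\circ\rho_v)(1+n)^{-1}M^{-1}$ with $n\in\mathrm{M}_2(I)$ and $\alpha'(X)-\alpha(X)\in I$; using $\mathfrak{m}_R I=0$, the resulting equations are exactly those of (i), now tensored with $I$, which immediately gives the equivalence $(R,\rho_2)\in\mathscr{C}_v\Leftrightarrow[\rho_2-\rho_1]\in L_v\otimes I$.

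The substantive axioms are (P4) and (P5). For (P4), the strategy is: given $(R_i,\rho_i)\in\mathscr{C}_v$ with presentations $(M_i,\alpha_i)$ and compatible reductions in $R_0:=R_1/I_1\cong R_2/I_2$, first alter $(M_2,\alpha_2)$ using the centralizer of $\alpha_2\circ\rho_v$ until $(\bar M_1,\bar\alpha_1)=(\bar M_2,\bar\alpha_2)$ in $R_0$, then glue the two pairs via the fiber product to obtain $(M_3,\alpha_3)$ over $R_3$ presenting $\rho_1\oplus\rho_2$. For (P5), I would inductively choose $(M_n,\alpha_n)$ over $R/\mathfrak{m}^n$ lifting $(M_{n-1},\alpha_{n-1})$ via (P6), then take the $\mathfrak{m}$-adic limit. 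The main obstacle is the adjustment step in (P4): one must argue that the different presentations of the common reduction in $R_0$ differ only by an element of the centralizer of $\alpha\circ\rho_v$, and exploit the rigidity coming from the distinctness of the eigenvalues $\widehat\lambda q_v,\widehat\lambda$ of $\alpha\rho_v(s)$ in $W(\bk)[[X]]$, even though residually they coincide (so the centralizer mod $\mathfrak{m}$ is the two-dimensional centralizer of a unipotent, requiring careful bookkeeping).
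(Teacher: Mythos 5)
Your part (i) is correct and essentially the paper's computation: the paper obtains $\Dim_{\bk}H^1(G_v/I_v,\Ad^0\bar{\rho}_v)=\Dim_{\bk}H^0(G_v,\Ad^0\bar{\rho}_v)$ from [S2, Prop.\ 18] and then computes the centralizer of $\bar{\rho}_v(s)$, whereas you compute the coinvariants $\Ad^0\bar{\rho}_v/(s-1)\Ad^0\bar{\rho}_v$ directly --- the same matrix computation. Your explicit identification of $L_v$ with the line of ``$t$-values'' $\bk E_{12}$ is also correct and is in fact more detailed than the paper, which merely exhibits the spanning cocycle $c_1$ attached to $f_1\circ\rho_v$. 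For (ii), your handling of (P1)--(P4), (P6), (P7) matches the paper in substance; in particular for (P4) you correctly isolate the key point, that two presentations of the same reduction differ by the centralizer of $\alpha\circ\rho_v(s)$, which the paper resolves by writing down the explicit upper-triangular correction $x=(q_v-1)m_2-m_1+m_3\in I_1$.

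The one step that does not work as written is (P5). You propose to choose presentations $(M_n,\alpha_n)$ of $\rho_n:=\rho\ (\mathrm{mod}\ \mathfrak{m}_R^n)$ inductively, ``lifting $(M_{n-1},\alpha_{n-1})$ via (P6).'' But lifting the pair $(M_{n-1},\alpha_{n-1})$ arbitrarily to $R/\mathfrak{m}_R^n$ produces a presentation of \emph{some} lift $\tilde{\rho}_n$ of $\rho_{n-1}$, and there is no reason for $\tilde{\rho}_n$ to equal $\rho_n$: the hypothesis of (P5) only guarantees that $\rho_n$ admits \emph{a} presentation, not one compatible with the presentation you fixed at level $n-1$. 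To repair the induction you would need an extra argument showing that any presentation of $\rho_n$ can be modified, using the centralizer ambiguity in $M$ and the freedom in $\alpha(X)$, into one reducing to $(M_{n-1},\alpha_{n-1})$; this is true here (it amounts to the surjectivity underlying (P6)/(P7) at the level of cocycles rather than cohomology classes), but it is not automatic and you do not supply it. The paper sidesteps the issue entirely by compactness: the set $S_n$ of presentations $(\alpha_n',M_n')$ of the \emph{given} $\rho_n$ is a nonempty finite set (finite because $R/\mathfrak{m}_R^n$ is finite), reduction makes $(S_n)_n$ an inverse system, and $\varprojlim_n S_n\neq\emptyset$; any element of the limit yields the required $(M,\alpha)$ over $R$. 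I recommend replacing your inductive step by this inverse-limit argument.
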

\begin{proof}
(i) First we prove that $\Dim_{\bk}H^1(G_v/I_v,\Ad^0\bar{\rho}_v)=1$.
By Proposition 18 of [S2] the dimension of $H^1(G_v/I_v,\Ad^0\bar{\rho}_v)$
is the same as that of $H^0(G_v,\Ad^0\bar{\rho}_v)$.
Thus it suffices to show that $H^0(G_v,\Ad^0\bar{\rho}_v)$ is
one-dimensional. This follows from
\[
 \left(
\begin{array}{cc}
\lambda & \lambda \\
0 & \lambda 
\end{array}
\right)
\left(
\begin{array}{cc}
a & b \\
c & -a 
\end{array}
\right)
\left(
\begin{array}{cc}
1/\lambda & -1/\lambda \\
0 & 1/\lambda 
\end{array}
\right)
=
\left(
\begin{array}{cc}
a+c & -2a+b-c \\
c & -(a+c) 
\end{array}
\right),
\]
where $a,b,c\in\bk$.

Next we prove that $\Dim_{\bk}L_v=1$.
Let $f_1:W[[X]]\to\bk[X]/(X^2)$ be the morphism in $\mathscr{A}$
determined by $f_1(X)=X$.
We define $\rho_1:G_v\to\GL_2(\bk[X]/(X^2))$ by the
composition $f_1\circ\rho_v$.
The images of $s$ and $t$ satisfy the relation $sts^{-1}=t^{q_v}$.
Let $c_1$ be the 1-cocycle corresponding to $\rho_1$.
The space $L_v$ is spanned by the class of $c_1$. 
Thus we have $\Dim_{\bk}L_v=1$.

(ii) The conditions (P1), (P2), (P3), (P6) and (P7) follow 
from the definition of $(\mathscr{C}_v,L_v)$.

First we prove the condition (P4). Suppose that we have rings 
$(R_1,\mathfrak{m}_{R_1}),(R_2,\mathfrak{m}_{R_2})
\in\mathscr{A}$, lifts $\rho_i\in\mathscr{C}_v(R_i)$,
ideals $I_i\subset R_i$, and an identification 
$\phi:R_1/I_1\stackrel{\sim}{\to}R_2/I_2$ under which 
$\rho_1\ (\mathrm{mod}\ I_1)=\rho_2\ (\mathrm{mod}\ I_2)$.
Take $\alpha_i\in\Hom_{\mathscr{A}}(W(\bk)[[X]],R_i)$ and
$M_i\in 1+\mathrm{M}_2(\mathfrak{m}_{R_i})$ such that 
$\rho_i=M_i(\alpha_i\circ\rho_v)M_i^{-1}$, $i=1,2$.
We claim that there exist $\alpha\in\Hom_{\mathscr{A}}(W(\bk)[[X]],R_3)$
and $M\in 1+\mathrm{M}_2(\mathfrak{m}_{R_3})$ such that 
$M(\alpha\circ\rho_v)M^{-1}=\rho_1\oplus\rho_2$.
By conjugating $\rho_1$ by some lift of $M_2\ (\mathrm{mod}\ I_2)$ to
$R_1$, we may assume that $M_2=1$.
Since $\alpha_1\circ\rho_v(s)=\alpha_2\circ\rho_v(s)$,
the matrix $M_1\ (\mathrm{mod}\ I_1)$ commutes with 
$(\alpha_1\ (\mathrm{mod}\ I_1))\circ\rho_v(s)$.
Let 
$\left(
\begin{array}{cc}
1+m_1 & m_2 \\
0 & 1+m_3 
\end{array}
\right)
\in 1+\mathrm{M}_2(\mathfrak{m}_{R_1})$
be a lift of $M_1\ (\mathrm{mod}\ I_1)$.
Put $M_1':=
\left(
\begin{array}{cc}
1+m_1 & m_2 \\
0 & 1+m_3-x 
\end{array}
\right)$,
where $x:=(q_v-1)m_2-m_1+m_3$.
Note that $x\in I_1$.
Then $M_1'\in 1+\mathrm{M}_2(\mathfrak{m}_{R_1})$
commutes with $\alpha_1\circ\rho_v(s)$.
We now replace $M_1$ by $\widetilde{M}_1:=M_1M_1'^{-1}$ and $\alpha_1$
by some $\widetilde{\alpha}_1:W(\bk)[[X]]\to R_1$ such that
$\widetilde{M}_1(\widetilde{\alpha}_1\circ\rho_v)
\widetilde{M}_1^{-1}=M_1(\alpha_1\circ\rho_v)M_1^{-1}$.
Defining $M:=(\widetilde{M}_1,1)\in 1+\mathrm{M}_2(\mathfrak{m}_{R_3})$
and $\alpha:=(\widetilde{\alpha}_1,\alpha_2):W(\bk)[[X]]\to R_3$,
the condition (P4) is verified.

Next we prove the condition (P5).
Suppose that we have a ring $R\in\mathscr{A}$ and a 
$\delta_v$-lift $\rho$ of $\bar{\rho}_v$ to $R$ such that each
$\rho\ (\mathrm{mod}\ \mathfrak{m}_{R}^n)
\in\mathscr{C}_v(R/\mathfrak{m}_{R}^n)$.
Put $\rho_n:=\rho\ (\mathrm{mod}\ \mathfrak{m}_{R}^n)$.
Take $\alpha_n\in\Hom_{\mathscr{A}}(W(\bk)[[X]],R/\mathfrak{m}_{R}^n)$ 
and $M_n\in 1+\mathrm{M}_2(\mathfrak{m}_R/\mathfrak{m}_{R}^n)$ 
such that $\rho_n=M_n(\alpha_n\circ\rho_v)M_n^{-1}$.
We claim that there exist $\alpha\in\Hom_{\mathscr{A}}(R_v,R)$
and $M\in 1+\mathrm{M}_2(\mathfrak{m}_{R})$ such that 
$M(\alpha\circ\rho_v)M^{-1}=\rho$.
Put $S_n:=\{(\alpha'_n,M'_n)\mid \rho_n=M'_n
(\alpha'_n\circ\rho_v)M'^{-1}_n\}$.
Since $\mathscr{C}_v(R/\mathfrak{m}_{R}^n)$ is finite,
$S_n$ is finite. For each $n$, $S_n$ is not empty set.
Thus $\underset{n}{\varprojlim}S_n$ is not empty set,
the condition (P5) is verified.
\end{proof}

\subsection{Case II}

Suppose $\bar{\rho}_v$ is ramified and $v\nmid p$.
In addition, suppose $\bar{\rho}_v(I_v)$ is of order prime to $p$.
Define the functor $\mathscr{C}_v:\mathscr{A}\to\mathbf{Sets}$ by 
\[
\mathscr{C}_v(R):=\{\rho:G_v\to\GL_2(R)\mid 
\rho\ (\mathrm{mod}\ \mathfrak{m}_R)=\bar{\rho}_v,
\rho(I_v)\stackrel{\sim}{\to}\bar{\rho}_v(I_v), \det\rho=\delta_v\}.
\]
Moreover, if $\rho_0:G_v\to\GL_2(\bk[X]/(X^2))$ denotes the trivial lift
of $\bar{\rho}_v$, we define 
a subspace $L_v\subset H^1(G_v,\Ad^0\bar{\rho}_v)$
to be the set
\[
 \{[c]\in H^1(G_v,\Ad^0\bar{\rho}_v)\mid(1+Xc)\rho_0\in
\mathscr{C}_v(\bk[X]/(X^2))\}.
\]

\begin{lemma}
We have \\
$(\mathrm{i})$ $\Dim_{\bk}L_v=\Dim_{\bk}H^0(G_v,\Ad^0\bar{\rho}_v)$.\\
$(\mathrm{ii})$ The pair $(\mathscr{C}_v,L_v)$ satisfies the conditions
$\mathrm{(P1)}$-$\mathrm{(P7)}$ of Definition 1.
\end{lemma}
\begin{proof}
This lemma follows from the definitions and the Schur-Zassenhaus theorem.
\end{proof}

\subsection{Case III}

Suppose $\bar{\rho}_v$ is ramified and $v\nmid p$.
In addition, suppose the order of $\bar{\rho}_v(I_v)$ is
divisible by $p$. 
By Lemma 3.1 of [G], since $p\geq 7$, 
we may assume that $\bar{\rho}_v$ is given by the form
\[
\bar{\rho}_v=
 \left(
\begin{array}{cc}
\varphi\bar{\chi}_p & \gamma \\
0 & \varphi
\end{array}
\right),
\]
for a character $\varphi:G_v\to\bk^{\times}$ and a nonzero
continuous function $\gamma:G_v\to\bk$.
The functor $\mathscr{C}_v:\mathscr{A}\to\mathbf{Sets}$
is given by 
\begin{align*}
\mathscr{C}_v(R):=\{\rho:G_v&\to\GL_2(R)\mid 
\text{there are } \widetilde{\gamma}\in
\mathrm{Map}(G_v,R) \text{ and }
M\in 1+\mathrm{M}_2(\mathfrak{m}_R) \\
&\text{ such that } \rho=M
\left(
\begin{array}{cc}
\widehat{\varphi}\chi_p & \widetilde{\gamma} \\
0 & \widehat{\varphi}
\end{array}
\right)
M^{-1}, \widetilde{\gamma}\ \mathrm{mod}\ \mathfrak{m}_R=\gamma\}.
\end{align*}
Moreover, if $\rho_0:G_v\to\GL_2(\bk[X]/(X^2))$ denotes the trivial lift
of $\bar{\rho}_v$, we define a 
subspace $L_v\subset H^1(G_v,\Ad^0\bar{\rho}_v)$
to be the set
\[
 \{[c]\in H^1(G_v,\Ad^0\bar{\rho}_v)\mid(1+Xc)\rho_0\in
\mathscr{C}_v(\bk[X]/(X^2))\}.
\]
\begin{lemma}
We have \\
$(\mathrm{i})$ $\Dim_{\bk}L_v=\Dim_{\bk}H^0(G_v,\Ad^0\bar{\rho}_v)$.\\
$(\mathrm{ii})$ The pair $(\mathscr{C}_v,L_v)$ satisfies the conditions
$\mathrm{(P1)}$-$\mathrm{(P7)}$ of Definition 1.
\end{lemma}
\begin{proof}
The proof of this lemma is almost identical argument as 
in [T, Section 1(E3)].
\end{proof}

\section{Lifting theorem over arbitrary number fields}
In this section, we give a generalization of Theorem 1
of [R1] to arbitrary number fields.

We define $\delta:G_{K,S}\to W(\bk)^{\times}$ by 
$\widehat{\det\bar{\rho}}\widehat{\bar{\chi}}_p^{-1}\chi_p$. 
Throughout this section, we consider
lifts of a fixed determinant $\delta$ and we always assume the following:
\begin{itemize}

\item The order of the image of $\bar{\rho}$ is divisible by $p$.

\end{itemize}
By the Schur-Zassenhaus theorem,
if the order of the image of $\bar{\rho}$ is prime to $p$,
we can find a lift to $W(\bk)$ of $\bar{\rho}$.
Since $p\geq 7$ and the order of the image of $\bar{\rho}$ is
divisible by $p$, we see from Section 260 of [D] that the
image of $\bar{\rho}$ is contained in the Borel subgroup of
$\GL_2(\bk)$ or the projective image of $\bar{\rho}$ is conjugate to either
$\mathrm{PGL}_2(\mathbb{F}_{p^r})$ or $\mathrm{PSL}_2(\mathbb{F}_{p^r})$
for some $r\in\mathbb{Z}_{> 0}$. In the Borel case,
by Theorem 2 of [K] we have a lift of $\bar{\rho}$ to $W(\bk)$.
Thus we may assume that the projective image of $\bar{\rho}$ is equal to
$\mathrm{PSL}_2(\mathbb{F}_{p^r})$ or $\mathrm{PGL}_2(\mathbb{F}_{p^r})$.
Then, by Lemma 17 of [R1], $\Ad^0\bar{\rho}$ is 
an irreducible $G_{K,S}$-module. (Note that one may replace
the assumption that the image of $\bar{\rho}$ contains 
$\mathrm{SL}_2(\bk)$ in [R1] with the assumption that
the projective image of $\bar{\rho}$ contains
$\mathrm{PSL}_2(\mathbb{F}_{p})$ without affecting the proof.)
The irreducibility of $\Ad^0\bar{\rho}$ implies 
that of $\Ad^0\bar{\rho}(1)$.

Let $K(\Ad^0\bar{\rho})$ be the fixed field of 
$\mathrm{Ker}(\Ad^0\bar{\rho})$. Put $E=K(\Ad^0\bar{\rho})K(\mu_p)$
and $D=K(\Ad^0\bar{\rho})\cap K(\mu_p)$.
\begin{lemma}
We have
\[
H^1(\Gal(E/K),\Ad^0\bar{\rho})=H^1(\Gal(E/K),
\Ad^0\bar{\rho}(1))=0.
\]
\end{lemma}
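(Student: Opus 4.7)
The plan is to apply the inflation--restriction sequence with the subgroup
$H := \Gal(E/K(\Ad^0\bar{\rho}))$.  Under restriction, $H \cong \Gal(K(\mu_p)/D)$,
which embeds into $\Gal(K(\mu_p)/K)$; the latter has order dividing $p-1$, so
$|H|$ is prime to $p$.  Consequently $H^i(H,V)=0$ for all $i\geq 1$ and every
$\bk[H]$-module $V$, so inflation gives isomorphisms
\[
 H^1(\Gal(E/K),V)\;\xrightarrow{\sim}\;H^1\!\bigl(\Gal(E/K)/H,\;V^H\bigr)
 \;=\;H^1\!\bigl(\Gal(K(\Ad^0\bar{\rho})/K),\;V^H\bigr)
\]
for $V=\Ad^0\bar{\rho}$ and $V=\Ad^0\bar{\rho}(1)$.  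By the reduction carried out
just before the lemma, $\Gal(K(\Ad^0\bar{\rho})/K)$ is the projective image of
$\bar{\rho}$ and is isomorphic to $\mathrm{PSL}_2(\mathbb{F}_{p^r})$ or
$\mathrm{PGL}_2(\mathbb{F}_{p^r})$.

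For $V=\Ad^0\bar{\rho}$, the subgroup $H$ acts trivially, so $V^H=V$ and the
task reduces to showing $H^1(\mathrm{PIm}(\bar{\rho}),\Ad^0\bar{\rho})=0$.  For
$V=\Ad^0\bar{\rho}(1)$, the subgroup $H$ acts by the scalar
$\bar{\chi}_p|_H$.  If $\bar{\chi}_p|_H$ is nontrivial then $V^H=0$ and the
cohomology vanishes automatically.  Otherwise, since $\bar{\chi}_p$ is injective
on $\Gal(K(\mu_p)/K)$ and $H$ embeds into this group, we must have $H=1$, so
$E=K(\Ad^0\bar{\rho})$; in this degenerate case $\bar{\chi}_p$ factors through
the projective image, which, being either simple (the $\mathrm{PSL}_2$ case, for
$p\geq 5$) or with abelianization of order $2$ (the $\mathrm{PGL}_2$ case),
forces $\bar{\chi}_p$ to be trivial or to equal the unique quadratic character
of $\mathrm{PGL}_2/\mathrm{PSL}_2$.

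It remains to verify, in each of these few cases, the vanishing of
$H^1\!\bigl(\mathrm{PSL}_2(\mathbb{F}_{p^r}),\Ad^0\bar{\rho}\bigr)$ and
$H^1\!\bigl(\mathrm{PGL}_2(\mathbb{F}_{p^r}),\Ad^0\bar{\rho}(i)\bigr)$ for
$i\in\{0,1\}$.  This is the main obstacle: it is a classical calculation in the
modular representation theory of $\mathrm{SL}_2$, where one inflates further
along the central extension $\mathrm{SL}_2\to\mathrm{PSL}_2$ (using that the
center has order coprime to $p$) and then cites the standard vanishing of
$H^1(\mathrm{SL}_2(\mathbb{F}_{p^r}),\mathfrak{sl}_2)$ due to
Cline--Parshall--Scott, valid in the range $p\geq 5$.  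The $\mathrm{PGL}_2$
case with the quadratic twist is handled identically via the Hochschild--Serre
spectral sequence for $\mathrm{PSL}_2\triangleleft\mathrm{PGL}_2$, whose
quotient has order $2$ (coprime to $p$).  The hypothesis $p\geq 7$ used
throughout the paper is more than enough to make these computations go
through.
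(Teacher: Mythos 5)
Your argument is correct and is essentially the paper's own proof: both run inflation--restriction along the prime-to-$p$ subgroup $\Gal(E/K(\Ad^0\bar{\rho}))$, kill the invariants of $\Ad^0\bar{\rho}(1)$ when $\bar{\chi}_p$ is nontrivial on that subgroup, and in the remaining case $K(\mu_p)\subset K(\Ad^0\bar{\rho})$ reduce the quadratic-twist problem to $H^1(\mathrm{PSL}_2(\mathbb{F}_{p^r}),\Ad^0\bar{\rho})=0$, which rests on the same $\mathrm{SL}_2$ vanishing (Lemma 2.48 of [DDT], i.e.\ the Cline--Parshall--Scott computation). The only difference is organizational: you split cases by whether $\bar{\chi}_p$ is trivial on $\Gal(E/K(\Ad^0\bar{\rho}))$, whereas the paper enumerates the possibilities for $[K(\mu_p):K]$ and $[D:K]$; the underlying steps coincide.
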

\begin{proof}
First we prove that $H^1(\Gal(E/K),\Ad^0\bar{\rho})=0$.
It suffices to show that
$H^1(\mathrm{SL}_2(\mathbb{F}_{p^r}),\Ad^0\bar{\rho})=0$
and $H^1(\mathrm{GL}_2(\mathbb{F}_{p^r}),\Ad^0\bar{\rho})=0$,
where $\mathrm{GL}_2(\mathbb{F}_{p^r})$ and 
$\mathrm{SL}_2(\mathbb{F}_{p^r})$ act on $\Ad^0\bar{\rho}$ by conjugation.
By Lemma 2.48 of [DDT], we see
$H^1(\mathrm{SL}_2(\mathbb{F}_{p^r}),\Ad^0\bar{\rho})=0$.
Since the index of $\mathrm{SL}_2(\mathbb{F}_{p^r})$ in
$\mathrm{GL}_2(\mathbb{F}_{p^r})$ is prime to $p$, we have
$H^1(\mathrm{GL}_2(\mathbb{F}_{p^r}),\Ad^0\bar{\rho})=0$.

Next we prove that $H^1(\Gal(E/K),\Ad^0\bar{\rho}(1))=0$.
As $D\subset K(\mu_p)$, we see $\Gal(K(\Ad^0\bar{\rho})/D)$
contains the commutator subgroup of $\Gal(K(\Ad^0\bar{\rho})/K)$.
Since the projective image of $\bar{\rho}$ is equal to
$\mathrm{PSL}_2(\mathbb{F}_{p^r})$ or
 $\mathrm{PGL}_2(\mathbb{F}_{p^r})$,
we see this commutator subgroup is just
 $\mathrm{PSL}_2(\mathbb{F}_{p^r})$.
Thus $\Gal(K(\Ad^0\bar{\rho})/K)/\mathrm{PSL}_2(\mathbb{F}_{p^r})\to
\Gal(D/K)$ is surjective, and so $[D:K]=1$ or $2$. 
Assume that  $[K(\mu_p):K]=1$, then $H^1(\Gal(E/K),\Ad^0\bar{\rho}(1))$
is isomorphic to $H^1(\Gal(E/K),\Ad^0\bar{\rho})$. Consequently
$H^1(\Gal(E/K),\Ad^0\bar{\rho}(1))=0$.

Assume that $[K(\mu_p):K]\geq 3$, or $[K(\mu_p):K]=2$ and $[D:K]=1$.
We apply the inflation-restriction sequence to $\Gal(E/K)$ and its
 normal subgroup $\Gal(E/K(\Ad^0\bar{\rho}))$.
Since $\Gal(K_S/E)$ fixes $\Ad^0\bar{\rho}(1)$
we see $\Ad^0\bar{\rho}(1)^{\Gal(E/K(\Ad^0\bar{\rho}))}=
\Ad^0\bar{\rho}(1)^{\Gal(K_S/K(\Ad^0\bar{\rho}))}$.
We get the exact sequence
\begin{align*}
0&\to H^1(\Gal(K(\Ad^0\bar{\rho})/K),
\Ad^0\bar{\rho}(1)^{\Gal(K_S/K(\Ad^0\bar{\rho}))})\to
H^1(\Gal(E/K),\Ad^0\bar{\rho}(1))\\
&\to H^1(\Gal(E/K(\Ad^0\bar{\rho})),
\Ad^0\bar{\rho}(1))^{\Gal(K(\Ad^0\bar{\rho})/K)}.
\end{align*}
The last term is trivial as $\Gal(E/K(\Ad^0\bar{\rho}))$ has
order prime to $p$.
As $\Gal(K_S/K(\Ad^0\bar{\rho}))$ acts trivially on 
$\Ad^0\bar{\rho}$ we see the action of
 $\Gal(K_S/K(\Ad^0\bar{\rho}))$ is 
$\chi_p|_{\Gal(K_S/K(\Ad^0\bar{\rho}))}$,
which is nontrivial, so 
$\Ad^0\bar{\rho}(1)^{\Gal(K_S/K(\Ad^0\bar{\rho}))}=0$.
Thus the left term in the sequence is trivial, so
$H^1(\Gal(E/K),\Ad^0\bar{\rho}(1))=0$.

Assume that $[K(\mu_p):K]=2$ and $[D:K]=2$, then we have $K(\mu_p)=D$. 
Note that $\mathrm{PSL}_2(\mathbb{F}_{p^r})$ has no non-trivial 
abelian quotients.
If the projective image
of $\bar{\rho}$ is $\mathrm{PSL}_2(\mathbb{F}_{p^r})$
for some $r\in\mathbb{Z}_{> 0}$, then $\Gal(E/K)$ has no non-trivial
abelian quotients. 
This contradicts the assumption that $[K(\mu_p):K]=2$. 
Hence, we assume that the projective image of $\bar{\rho}$
is $\mathrm{PGL}_2(\mathbb{F}_{p^r})$ for some $r\in\mathbb{Z}_{> 0}$.
Since the index of $\mathrm{PSL}_2(\mathbb{F}_{p^r})$ in
$\mathrm{PGL}_2(\mathbb{F}_{p^r})$ is equal to the index of 
$\Gal(E/K(\mu_p))$ in $\Gal(E/K)$, $\Gal(E/K(\mu_p))$ is isomorphic
to $\mathrm{PSL}_2(\mathbb{F}_{p^r})$.
We have
\[
H^1(\Gal(E/K),\Ad^0\bar{\rho}(1))\hookrightarrow
H^1(\Gal(E/K(\mu_p)),\Ad^0\bar{\rho}(1)).
\]
Since $\Ad^0\bar{\rho}(1)$ is isomorphic to $\Ad^0\bar{\rho}$
as a $\Gal(E/K(\mu_p))$-module and the cohomology group
$H^1(\Gal(E/K(\mu_p)),\Ad^0\bar{\rho})$ is zero, the proof is complete. 
\end{proof}

\begin{lemma}
If a pair $(\mathscr{C}_v, L_v)$ which is locally admissible 
is given for each $v\in S_{\mathrm{f}}$ and each elements $\phi\in 
H^1_{\{L_v^{\bot}\}}(G_{K,S},\Ad^0\bar{\rho}(1))$
and $\psi\in 
H^1_{\{L_v\}}(G_{K,S},\Ad^0\bar{\rho})$ are not zero, then
we can find a prime $w\not\in S$ and a locally admissible pair
$(\mathscr{C}_w,L_w)$ such that\\
$(1)$ $\Dim_{\bk}H^1(G_w/I_w,\Ad^0\bar{\rho})=\Dim_{\bk}L_w=1$,\\
$(2)$ the image of $\psi$ in $H^1(G_w/I_w,\Ad^0\bar{\rho})$
is not zero,\\
$(3)$ the image of $\phi$ in
 $H^1(G_w,\Ad^0\bar{\rho}(1))/L_w^{\bot}$
is not zero.
\end{lemma}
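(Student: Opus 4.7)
My plan is to produce the prime $w$ by a Chebotarev density argument on a suitable finite Galois extension $F/K$, choosing the Frobenius conjugacy class so that $\bar{\rho}|_{G_w}$ fits the Case~I framework of Section~3.1.

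First, I would invoke Lemma~3 via the inflation--restriction sequence for the normal subgroup $G_E$ of $G_{K,S}$ to conclude that both restrictions $\psi|_{G_E}$ and $\phi|_{G_E}$ are nonzero. As $\Gal(K_S/E)$ acts trivially on the coefficient modules, these restrictions are $\Gal(E/K)$-equivariant homomorphisms into $\Ad^0\bar{\rho}$ and $\Ad^0\bar{\rho}(1)$; by the irreducibility recorded at the start of Section~4 they are surjective. Let $F_\psi$ and $F_\phi$ denote the fixed fields inside $K_S$ of their respective kernels, so $\Gal(F_\psi/E)\cong\Ad^0\bar{\rho}$ and $\Gal(F_\phi/E)\cong\Ad^0\bar{\rho}(1)$ as $\Gal(E/K)$-modules, and set $F=F_\psi F_\phi$. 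Whenever $K(\mu_p)\ne K$, the character $\bar{\chi}_p$ is nontrivial on $\Gal(E/K)$, so $\Ad^0\bar{\rho}\not\cong\Ad^0\bar{\rho}(1)$; Schur plus irreducibility then forces $F_\psi\cap F_\phi=E$ and $\Gal(F/E)=\Gal(F_\psi/E)\times\Gal(F_\phi/E)$. The degenerate case $\mu_p\subset K$ requires separate bookkeeping.

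Next I would identify the target Frobenius. Choose $g\in\Gal(E/K(\mu_p))$ such that $\bar{\rho}(g)$ is conjugate to $\lambda\cdot u$, with $u$ a nontrivial unipotent and $\lambda\in\bk^\times$; such $g$ exists because the projective image of $\bar{\rho}$ contains $\mathrm{PSL}_2(\mathbb{F}_{p^r})$ and hence a unipotent of order~$p$. Triviality of $g$ on $K(\mu_p)$ gives $q_w\equiv1\pmod p$. Lift $g$ to $\tilde g\in\Gal(F/K)$. For $\tau=(\tau_\psi,\tau_\phi)\in\Gal(F_\psi/E)\times\Gal(F_\phi/E)$, the cocycle identity gives $\psi(\tilde g\tau)\equiv\psi(\tilde g)+\psi(\tau_\psi)\pmod{(\bar{\rho}(g)-1)\Ad^0\bar{\rho}}$ and the analogous relation for $\phi$. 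The $H^0$ calculation in the proof of Lemma~1 shows this cokernel is one-dimensional, so surjectivity of $\psi|_{G_E}$ lets us choose $\tau_\psi$ making $\psi(\tilde g\tau)$ nonzero in the cokernel; by the product decomposition of $\Gal(F/E)$ we can independently choose $\tau_\phi$ to achieve the analogous nonvanishing for $\phi$. Chebotarev then yields infinitely many $w\notin S$ whose Frobenius is conjugate to $\tilde g\tau$ in $\Gal(F/K)$.

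Taking $(\mathscr{C}_w,L_w)$ from Section~3.1, Lemma~1 gives~(1). For~(2), $\psi|_{G_w}$ is automatically unramified (as $w\notin S$), and the image in $H^1(G_w/I_w,\Ad^0\bar{\rho})\cong\Ad^0\bar{\rho}/(\bar{\rho}(\mathrm{Frob}_w)-1)\Ad^0\bar{\rho}$ is the Frobenius evaluation, nonzero by construction. For~(3), $\phi|_{G_w}$ is likewise unramified and lies in $H^1_{\mathrm{ur}}(G_w,\Ad^0\bar{\rho}(1))$. Since $L_w$ is a ramified line in $H^1(G_w,\Ad^0\bar{\rho})$ (the cocycle $c_1$ built from $t\mapsto\bigl(\begin{smallmatrix}1&X\\0&1\end{smallmatrix}\bigr)$ is nontrivial on inertia), it differs from $H^1_{\mathrm{ur}}(G_w,\Ad^0\bar{\rho})$; Tate local duality at $w\nmid p$ exchanges the unramified subspaces, so $L_w^\perp$ is a line in the two-dimensional $H^1(G_w,\Ad^0\bar{\rho}(1))$ distinct from $H^1_{\mathrm{ur}}(G_w,\Ad^0\bar{\rho}(1))$. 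Hence any nonzero unramified class at $w$ avoids $L_w^\perp$, and (3) follows once $\phi|_{G_w}\ne0$. The main obstacle is the linear disjointness of $F_\psi$ and $F_\phi$ over $E$ in the degenerate case $\mu_p\subset K$, where $\Ad^0\bar{\rho}$ and $\Ad^0\bar{\rho}(1)$ coincide as $\Gal(E/K)$-modules and one must exploit the linear independence of $\psi$ and $\phi$ as cohomology classes directly.
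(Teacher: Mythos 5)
Your strategy is essentially the paper's: restrict $\psi$ and $\phi$ to $\Gal(K_S/E)$ (nonzero by Lemma 4), pass to the fixed fields of their kernels, use the \v{C}ebotarev density theorem to find $w$ whose Frobenius maps under $\bar{\rho}$ to $\lambda$ times a nontrivial unipotent with $\bar{\chi}_p$ trivial (so $q_w\equiv 1\bmod p$), and equip $w$ with the Case I pair; your duality argument for (3) --- that the unramified subspaces are exact annihilators for $w\nmid p$ and that $L_w$ is a ramified line, so $L_w^{\bot}$ meets the unramified line of $H^1(G_w,\Ad^0\bar{\rho}(1))$ trivially --- is a legitimate substitute for the paper's citation of Lemma 4.8 of [BK]. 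The genuine gap is the step where you must choose $\tau$ achieving the two nonvanishing conditions simultaneously. You obtain it from the decomposition $\Gal(F_\psi F_\phi/E)\cong\Gal(F_\psi/E)\times\Gal(F_\phi/E)$, which you justify only when $\Ad^0\bar{\rho}\not\cong\Ad^0\bar{\rho}(1)$ as $\Gal(E/K)$-modules, and you explicitly leave the degenerate case ($\mu_p\subset K$, more generally $F_\psi=F_\phi$) unresolved. That case is not peripheral: removing the hypothesis $[K(\mu_p):K]\geq 3$ (so allowing, e.g., $K\supset\mu_p$) is precisely the point of the paper, so a proof valid only in the non-degenerate case does not establish the lemma as stated. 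Moreover your asserted dichotomy is itself not fully justified: irreducibility does show that $\Ad^0\bar{\rho}\not\cong\Ad^0\bar{\rho}(1)$ forces $F_\psi\cap F_\phi=E$, but the nonisomorphism for every $K\neq K(\mu_p)$ needs an argument --- $\det(\Ad^0\bar{\rho})$ is trivial, so the determinant comparison only rules out an isomorphism when $\bar{\chi}_p^{3}\neq 1$.

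The missing idea, which is how the paper's proof proceeds, is that no linear disjointness is needed at all. In both cases (compositum a product, or $E_\psi=E_\phi$) the restrictions give surjections $\Gal(E_\psi E_\phi/E)\to\Ad^0\bar{\rho}$ and $\Gal(E_\psi E_\phi/E)\to\Ad^0\bar{\rho}(1)$. The two conditions to be avoided, $\psi(\tau)\in-\psi(\widetilde{\sigma})+L$ and $\phi(\tau)\in-\phi(\widetilde{\sigma})+L'$ with $L,L'$ the two-dimensional upper-triangular subspaces (which are exactly $(\mathrm{Frob}_w-1)\Ad^0\bar{\rho}$ and $(\mathrm{Frob}_w-1)\Ad^0\bar{\rho}(1)$ for your choice of Frobenius), each cut out a coset of a subgroup of index $\#\bk\geq p$ in $\Gal(E_\psi E_\phi/E)$; two such cosets cannot exhaust the group since $p\geq 7$. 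Hence a single $\tau$ satisfying both avoidance conditions exists uniformly, whether or not the two auxiliary fields coincide, and the remainder of your argument (Chebotarev, Case I pair, evaluation at Frobenius modulo $L$ and $L'$) then goes through verbatim. Without this counting step, or some replacement for it, your proof omits exactly the new case the theorem is meant to cover.
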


\begin{proof}
Note that Lemma 4 implies that the 
restrictions of the cocycles $\psi$ and $\phi$
are non-zero homomorphisms 
$\phi:\Gal(K_S/E)\to\Ad^0\bar{\rho}(1)$
and $\psi:\Gal(K_S/E)\to\Ad^0\bar{\rho}$.
Let $E_{\phi}$ and $E_{\psi}$ be the fixed fields of the
respective kernels. Then,
 $\Gal(E_{\phi}/E)\to\Ad^0\bar{\rho}(1)$
and $\Gal(E_{\psi}/E)\to\Ad^0\bar{\rho}$ are injective
homomorphisms of $\mathbb{F}_p[G_{K,S}]$-modules. Since $\Ad^0\bar{\rho}$
is irreducible $G_{K,S}$-module, these morphisms are bijective,
and we see $E_{\phi}\cap E_{\psi}=E_{\psi}(=E_{\phi})$ or $E$.
If the intersection is $E$, then $\Gal(E_{\phi}E_{\psi}/E)$
is isomorphic to $\Gal(E_{\phi}/E)\times\Gal(E_{\psi}/E)$.
If the intersection is $E_{\psi}$, then $\Gal(E_{\phi}E_{\psi}/E)$
is isomorphic to $\Gal(E_{\psi}/E)$ and $\Gal(E_{\phi}/E)$.
Therefore, $\Gal(E_{\phi}E_{\psi}/E)$ may be regarded as a 
$\bk[\Gal(E/K)]$-module, moreover, natural homomorphisms
$\Gal(E_{\phi}E_{\psi}/E)\to\Ad^0\bar{\rho}(1)$
and $\Gal(E_{\phi}E_{\psi}/E)\to\Ad^0\bar{\rho}$ are surjective. 
Since $\mathrm{PSL}_2(\mathbb{F}_{p^r})$ 
has no non-trivial abelian quotients, the image of the morphism 
$\widetilde{\bar{\rho}}\times\chi_p
:G_{K,S}\to\mathrm{PGL}_2(\bk)\times\bk^{\times}$ 
contains $\mathrm{PSL}_2(\mathbb{F}_{p^r})\times 1$,
where $\widetilde{\bar{\rho}}$ is the projective image 
of $\bar{\rho}$ and $\chi_p$ 
is the mod $p$ cyclotomic character of $G_{K,S}$.
Thus there is an element $\sigma\in\Gal(E/K)$ such that
$\chi_p(\sigma)=1$ and $\bar{\rho}(\sigma)=
\left(
\begin{array}{cc}
\lambda & \lambda \\
0 & \lambda 
\end{array}
\right)$, for some element $\lambda\in\bk^{\times}$.
We denote by $\widetilde{\sigma}$ a lift to $\Gal(E_{\phi}E_{\psi}/K)$
of $\sigma$. 
Let $L$ be the subset of $\Ad^0\bar{\rho}$
whose elements have the form
$\left(
\begin{array}{cc}
\ast & \ast \\
0 & \ast
\end{array}
\right)$
and let $L'$ be the subset of 
$\Ad^0\bar{\rho}(1)$
whose elements have the form
$\left(
\begin{array}{cc}
\ast & \ast \\
0 & \ast
\end{array}
\right)$.
Since $L$ and $L'$ are two-dimensional, there exists 
$\tau\in\Gal(E_{\phi}E_{\psi}/E)$ such that
$\psi(\tau)\not\in -\psi(\widetilde{\sigma})+L$
and $\phi(\tau)\not\in -\phi(\widetilde{\sigma})+L'$.

By the \v{C}ebotarev density theorem,
we can choose a place $w\not\in S$ which is unramified in 
$E_{\phi}E_{\psi}/K$ such that $\mathrm{Frob}_w=\tau\widetilde{\sigma}$.
Take $\mathscr{C}_w$ and $L_w$ as in Case I. 
By Lemma 1 of this paper and Lemma 4.8 of [BK], it follows that 
$(w,\mathscr{C}_w,L_w)$ has the desired properties.
(Note that one may replace function
fields in [BK] with number fields without affecting the proof.)
\end{proof}

\begin{lemma}
Suppose that one is given locally admissible pairs
$(\mathscr{C}_v,L_v)_{v\in S_{\mathrm{f}}}$ such that
\[
 \sum_{v\in S_{\mathrm{f}}}\Dim_{\bk}L_v\geq\sum_{v\in S}\Dim_{\bk}
H^0(G_v,\Ad^0\bar{\rho}).
\]
Then we can find a finite set of places $T\supset S$ and
locally admissible pairs $(\mathscr{C}_v,L_v)_{v\in T\smallsetminus S}$
such that
\[
 H^1_{\{L_v^{\bot}\}}(G_{K,T},\Ad^0\bar{\rho}(1))=0.
\]
\end{lemma}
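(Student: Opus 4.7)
The strategy is Ramakrishna's killing argument: successively enlarge $T$ by Case I primes supplied by Lemma 6, each of which eliminates one class from the current dual Selmer group. The Greenberg--Wiles formula serves as the bookkeeping device.

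Because $\Ad^0\bar{\rho}$ and $\Ad^0\bar{\rho}(1)$ are irreducible non-trivial $G_{K,S}$-modules, we have $H^0(G_K,\Ad^0\bar{\rho})=H^0(G_K,\Ad^0\bar{\rho}(1))=0$; and since $p$ is odd, the archimedean cohomology contributions vanish. The Greenberg--Wiles formula then reads
\[
\Dim H^1_{\{L_v\}}(G_{K,S},\Ad^0\bar{\rho}) - \Dim H^1_{\{L_v^{\bot}\}}(G_{K,S},\Ad^0\bar{\rho}(1)) = \sum_{v\in S_{\mathrm{f}}} \Dim L_v - \sum_{v\in S} \Dim H^0(G_v,\Ad^0\bar{\rho}),
\]
whose right-hand side is $\geq 0$ by the hypothesis. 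In particular, whenever the dual Selmer group is non-zero the ordinary Selmer group is non-zero as well.

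I now induct on $\Dim H^1_{\{L_v^{\bot}\}}$. Suppose this dimension is positive; select non-zero classes $\phi\in H^1_{\{L_v^{\bot}\}}$ and $\psi\in H^1_{\{L_v\}}$, and apply Lemma 6 to obtain a prime $w\notin S$ and a Case I locally admissible pair $(\mathscr{C}_w,L_w)$ satisfying (1)--(3). Property (1) together with Lemma 1(i) gives $\Dim L_w=\Dim H^0(G_w,\Ad^0\bar{\rho})=1$, so passing from $S$ to $T_1:=S\cup\{w\}$ increments both sides of the Wiles equality by the same amount, and the hypothesis inequality is preserved. Property (3) asserts $\phi|_{G_w}\notin L_w^{\bot}$, so $\phi$ no longer lies in the dual Selmer group over $G_{K,T_1}$; by the preserved Wiles formula this drop in dual Selmer is matched by a drop in the Selmer group, consistent with property (2), which tells us $\psi|_{G_w}$ is unramified non-trivial and hence transverse to the ramified Ramakrishna line $L_w$.

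Iterating this procedure, $\Dim H^1_{\{L_v^{\bot}\}}$ decreases strictly at each step, so after finitely many rounds it reaches zero and the desired $T$ is produced. The point requiring most care is that adding the condition at $w$ genuinely excises $\phi$ from the dual Selmer group rather than creating new classes via inflation from $G_{K,S}$ to $G_{K,T_1}$; this rests on the transversality $L_w\cap H^1(G_w/I_w,\Ad^0\bar{\rho})=0$ inside the two-dimensional $H^1(G_w,\Ad^0\bar{\rho})$, which is precisely what properties (2)--(3) of Lemma 6 encode.
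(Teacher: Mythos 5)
Your overall strategy is the same as the paper's: use the Greenberg--Wiles formula to produce a nonzero $\psi$ in the Selmer group whenever the dual Selmer group is nonzero, invoke the auxiliary-prime lemma (Lemma 5 of the paper --- you call it ``Lemma 6'', which is the statement being proved) to obtain a Case I prime $w$, and iterate. Your observation that $\Dim_{\bk}L_w=\Dim_{\bk}H^0(G_w,\Ad^0\bar{\rho})=1$ keeps the hypothesis inequality intact at each step is correct and worth making explicit; the paper leaves it implicit in property (1).

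The one place your write-up goes astray is the justification of the crucial step, namely that passing from $S$ to $S\cup\{w\}$ does not introduce new dual Selmer classes (classes ramified at $w$ whose restriction to $G_w$ happens to land in $L_w^{\bot}$). You attribute this to ``the transversality $L_w\cap H^1(G_w/I_w,\Ad^0\bar{\rho})=0$'' and assert that this is what properties (2)--(3) encode. Neither half of that is right: the transversality is a feature of the Case I pair itself (its $L_w$ is spanned by a ramified cocycle), not of properties (2)--(3), and by itself it does not prevent a global class ramified at $w$ from satisfying the condition $L_w^{\bot}$ at $w$ --- the condition $L_w^{\bot}$ lives on the dual side and does not force unramifiedness. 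The correct mechanism, which is how the paper argues, is a second application of the Greenberg--Wiles formula: comparing $H^1_{\{L_v^{\bot}\}}(G_{K,S},\Ad^0\bar{\rho}(1))$ with the dual Selmer group over $G_{K,S\cup\{w\}}$ in which the condition at $w$ is fully relaxed, the cokernel of the (injective) inflation map has order equal to that of the cokernel of $H^1_{\{L_v\}}(G_{K,S},\Ad^0\bar{\rho})\to H^1(G_w/I_w,\Ad^0\bar{\rho})$; property (2) says this map is surjective, hence the relaxed group equals the old one, and only then does imposing $L_w^{\bot}$ at $w$ cut out $\phi$ by property (3). Your proof needs this two-step comparison (or an equivalent counting argument) spelled out; with it, the iteration terminates exactly as you describe.
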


\begin{proof}
Suppose that 
$0\not=\phi\in
 H^1_{\{L_v^{\bot}\}}(G_{K,S},\Ad^0\bar{\rho}(1))$.
By the assumption of the lemma and Theorem 4.50 of [H], we see that
$\Dim_{\bk}H^1_{\{L_v\}}(G_{K,S},\Ad^0\bar{\rho})\geq
\Dim_{\bk}H^1_{\{L_v^{\bot}\}}(G_{K,S},\Ad^0\bar{\rho}(1))$.
Then we can find 
$0\not=\psi\in H^1_{\{L_v\}}(G_{K,S},\Ad^0\bar{\rho})$.
Thus we can find a place $w\not\in S$ and a locally admissible pair
$(\mathscr{C}_w,L_w)$ such that\\
(1) $\Dim_{\bk}H^1(G_w/I_w,\Ad^0\bar{\rho})=\Dim_{\bk}L_w$,\\
(2) $H^1_{\{L_v\}}(G_{K,S},\Ad^0\bar{\rho})
\to H^1(G_w/I_w,\Ad^0\bar{\rho})$
is surjective,\\
(3) the image of $\phi$ in
 $H^1(G_w,\Ad^0\bar{\rho}(1))/L_w^{\bot}$
is not zero,\\
by Lemma 5.
We have an injection
\[
 H^1_{\{L_v^{\bot}\}}(G_{K,S},\Ad^0\bar{\rho}(1))
\hookrightarrow H^1_{\{L_v^{\bot}\}\cup
\{H^1(G_w,\Ad^0\bar{\rho}(1))\}}(G_{K,S\cup\{w\}},
\Ad^0\bar{\rho}(1))
\]
and we see that its cokernel has order equal to
\[
\#\mathrm{Coker}(H^1_{\{L_v\}}(G_{K,S},\Ad^0\bar{\rho})
\to H^1(G_w/I_w,\Ad^0\bar{\rho})),
\]
by applying Theorem 4.50 of [H] to
\[
H^1_{\{L_v^{\bot}\}}(G_{K,S},\Ad^0\bar{\rho}(1))
\]
and
\[
H^1_{\{L_v^{\bot}\}\cup
\{H^1(G_w,\Ad^0\bar{\rho}(1))\}}(G_{K,S\cup\{w\}},
\Ad^0\bar{\rho}(1)).
\]
Thus
\[
 H^1_{\{L_v^{\bot}\}}(G_{K,S},\Ad^0\bar{\rho}(1))=
 H^1_{\{L_v^{\bot}\}\cup
\{H^1(G_w,\Ad^0\bar{\rho}(1))\}}(G_{K,S\cup\{w\}},
\Ad^0\bar{\rho}(1)),
\]
and we obtain an exact sequence
\begin{align*}
0&\to H^1_{\{L_v^{\bot}\}\cup
\{L_w^{\bot}\}}(G_{K,S\cup\{w\}},
\Ad^0\bar{\rho}(1))\to 
H^1_{\{L_v^{\bot}\}}(G_{K,S},\Ad^0\bar{\rho}(1))\\
&\to H^1(G_w,\Ad^0\bar{\rho}(1))/L_w^{\bot}.
\end{align*}
Hence $\phi\not\in H^1_{\{L_v^{\bot}\}\cup
\{L_w^{\bot}\}}(G_{K,S\cup\{w\}},
\Ad^0\bar{\rho}(1))\subset
H^1_{\{L_v^{\bot}\}}(G_{K,S},\Ad^0\bar{\rho}(1))$.
The lemma will follow by repeating such a computation.
\end{proof}

Let $S'$ denote the set of places of $K$ consisting of
the places above $p$, the infinite places and
the places at which $\bar{\rho}$ is ramified.

\begin{proof}[Proof of Theorem]
This follows almost at once from Proposition 1 and Lemma 6.
For each places $v$ satisfying $v\in S'_{\mathrm{f}}$ and $v\nmid p$,
take $\mathscr{C}_v$ and $L_v$ as in Case II or Case III.
For places $v\mid p$, take $\mathscr{C}_v$ and $L_v$ as
the collection of
all $\delta|_{G_v}$-lifts of $\bar{\rho}|_{G_v}$
and $H^1(G_v,\Ad^0\bar{\rho})$, respectively.
By Theorem 4.52 of [H] and the assumption of Theorem, we have 
\[
\sum_{v|p}\Dim_{\bk}L_v=\sum_{v|p}
\Dim_{\bk}H^0(G_v,\Ad^0\bar{\rho})+\sum_{v|p}
[K_v:\mathbb{Q}_p]\Dim_{\bk}\Ad^0\bar{\rho}
\]
and thus we obtain
\[
\sum_{v\in S'_{\mathrm{f}}}
\Dim_{\bk}L_v\geq\sum_{v\in S'}\Dim_{\bk}
H^0(G_v,\Ad^0\bar{\rho}).
\]
\end{proof}

\end{document}